\newtheorem{theorem}{Theorem}[section]
\newtheorem{definition}[theorem]{Definition}
\newtheorem{lemma}[theorem]{Lemma}
\newtheorem{example}[theorem]{Example}
\newcommand{\vanish}[1]{}\parskip=12pt
\def\p{\prime}
\def\pp{{\prime\prime}}
\def\b{\textbf{B}}
\def\D{\mathcal{D}}
\def\K{\mathcal{K}}
\numberwithin{equation}{section}
\begin{document}
\title{Braid Index Bounds Ropelength From Below}
\author{Yuanan Diao}
\address{Department of Mathematics and Statistics\\
University of North Carolina Charlotte\\
Charlotte, NC 28223}
\email{ydiao@uncc.edu}
\subjclass[2010]{Primary: 57M25; Secondary: 57M27}
\keywords{knots, links, braid index, ropelength.}

\begin{abstract}
For an un-oriented link $\K$, let $L(\K)$ be the ropelength of $\K$. It is known that when $\K$ has more than one component, different orientations of the components of $\K$ may result in different braid index. We define the largest braid index among all braid indices corresponding to all possible orientation assignments of $\K$ the {\em absolute braid index} of $\K$ and denote it  by $\b(\K)$. In this paper, we show that
there exists a constant $a>0$ such that $L(\K)\ge a \b(\K) $ for any $\K$, {\em i.e.}, the ropelength  of any link is bounded below by its absolute braid index (up to a constant factor). 
\end{abstract} 

\maketitle
\section{Introduction}\label{s1}

An important geometric property of a link is its ropelength, defined (intuitively) as the minimum length of a unit thickness rope that can be used to tie the link. Let $\K$ be an un-oriented link, $Cr(\K)$ be the minimum crossing number of $\K$ and $L(\K)$ be the ropelength of $\K$. One way to understand the ropelength of a link is to associate it with the topological complexity of the link as measured by some link invariant. For example one can attempt to express the ropelength, or an estimate of it, of a link as a function of the minimum crossing number of the link. This turned out to be a very difficult problem in general and results are limited. For example, while it has been shown that $L(\K)\ge 31.32$ for any nontrivial knot $\K$ \cite{Denne2006}, the precise ropelength for any given nontrivial knot is not known. It has been shown in \cite{Buck, Buck2} that in general $L(\K)\ge 1.105 (Cr(\K))^{3/4}$ and that this $3/4$ power can be attained by a family of infinitely many links \cite{Cantarella1998, Diao1998}. On the other hand, not all links obtain this $3/4$ power law since there exist families of infinitely many links such that the ropelength of a link from any of these families grows linearly as the crossing number of the link \cite{Diao2003}. This result is based on the fact that the ropelength of a link is bounded below by the bridge number of the link (multiplied by some positive constant) and the fact that there are families of (infinitely many) links whose bridge numbers are proportional to their crossing numbers. To the knowledge of the author, the bridge number is the only known link invariant that has been used to establish the ropelength of a link. Of course, if a link has a small bridge number, then we would not be able to establish a good lower bound of the ropelength of the link using its bridge number. In this paper we show that the braid index of a link can also be used to bound the ropelength of the link from below (again up to the multiple of a positive constant). 

For an un-oriented link $\K$ with more than one component, different orientations of the components of $\K$ may result in different braid index (an invariant of oriented links). We will call the largest braid index among all braid indices corresponding to different orientation assignments of $\K$ the {\em absolute braid index} of $\K$ and denote it by
$\b(\K)$. In this paper, we show that there exists a constant $a>0$ such that $L(\K)\ge a \b(\K) $ for any $\K$, {\em i.e.}, the ropelength  of any link is bounded below by its absolute braid index (up to a constant factor). Since the bridge number of a link is smaller than or equal to its absolute braid index, and many links with bounded bridge numbers can have absolute braid indices proportional to their crossing numbers, this result will allow us to establish better ropelength lower bound for many more links.

\section{Special cord diagrams and their Seifert diagrams}\label{s2}

\begin{definition}\label{cord}
{\em
Let $\K$ be an oriented link and $\D$ be a projection diagram of $\K$. Without loss of generality we will assume that the projection plane is $z=0$. Let $\alpha_1$, $\alpha_2$, ..., $\alpha_n$ be simple arcs of $\D$. We say that $\alpha_1$, $\alpha_2$, ..., $\alpha_n$ form a {\em special cord diagram} $R$ (associated with $\D$) if the following conditions hold: (i) the end points of $\alpha_1$, $\alpha_2$, ..., $\alpha_n$ do not cross each other and are distributed on a topological circle $C$ (in the projection plane $z=0$);  (ii) the interiors of $\alpha_1$, $\alpha_2$, ..., $\alpha_n$ are completely within the disk $\overline{C}$ bounded by $C$; (iii) $\D\setminus \cup_{1\le j\le n}\alpha_j$ does not intersect $\overline{C}$; (iv) the arc $\gamma_j$ on $\D$ corresponding to $\alpha_j$ resides in a slab $Z_j$ defined by $z_j^\p\le z\le z_j^{\pp}$ for some $z_j^\p\le z_j^\pp$; (v) $Z_k\cap Z_j=\emptyset$ if $j\not=k$.}
\end{definition}

Notice that by conditions (iv) and (v), a new special cord diagram $R^\p$ can be obtained from a cord diagram $R$ by replacing each $\alpha_j$ with a simple curve $\alpha_j^\p$: the choice of $\alpha_j^\p$ is arbitrary so long as it is the projection of a curve $\gamma_j^\p$ that resides within the slab $Z_j$ sharing the same end points with $\gamma_j$ and is bounded within $C$. The result is still a special cord diagram associated with $\D^\p$ where $\D^\p$ is the resulting new projection diagram which is still a projection diagram of $\K$. We say that $R^\p$ is {\em equivalent} to $R$. In other word, the cords have fixed end points but otherwise can move freely within $C$.

\begin{definition}\label{Seifert_diagram}{\em 
A Seifert diagram of a special cord diagram is the diagram obtained from the special cord diagram by smoothing all crossings in the diagram.}
\end{definition}

Note: since we are only interested in the Seifert diagrams of special cord diagrams, the over/under strands of the crossings in the diagrams are not important to us and will not be shown in our figures. Also, in a Seifert diagram of a special cord diagram, there are only two types of curves: topological circles (Seifert circles) and simple curves  with their end points on $C$ (we will call these {\em partial Seifert circles}). See Figure \ref{cord_diagram} for an illustration of a special cord diagram and its Seifert diagram.

\begin{figure}[htb!]
\includegraphics[scale=1.0]{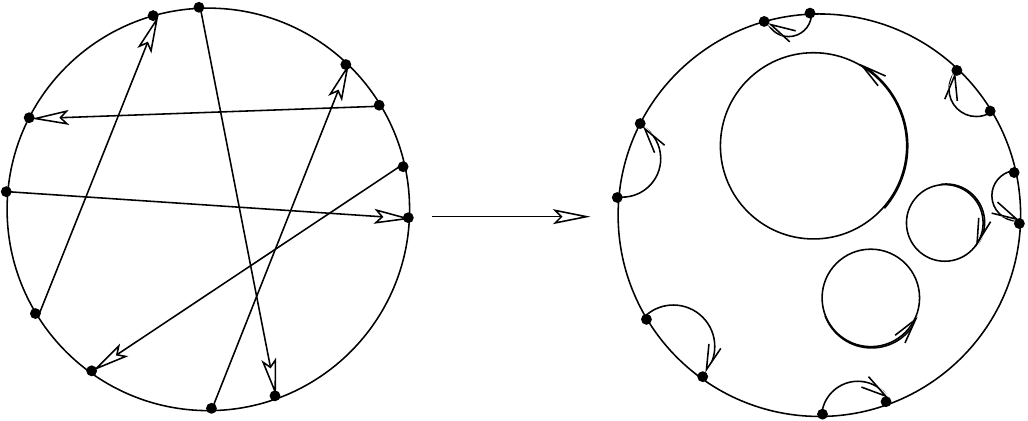}
\caption{Left: A special cord diagram; Right: The Seifert diagram of it.
\label{cord_diagram}}
\end{figure} 

Let us assign $C$ an (arbitrary) orientation. Consider an oriented simple curve $\beta$ with its end points on $C$ and its interior bounded within $C$. We call  the arc $\beta^\p$ of $C$ that shares end points with $\beta$ and is parallel to $\beta$ (in terms of their orientations) the {\em companion} of $\beta$ and the region bounded by $\beta$ and $\beta^\p$ the {\em domain} of $\beta$.

\begin{definition}\label{coherent}{\em 
A special cord diagram $R$ is said to be {\em coherent} if we can choose an orientation of $C$ such that the Seifert diagram of $R$ satisfies the following conditions:
(i) its Seifert circles (if there are any) are concentric to each other and all share the same orientation with $C$; (ii) the domain of any partial Seifert circle cannot contain any Seifert circles; (iii) if the domain of a a partial Seifert circle contains another partial Seifert circle, it must contain the entire domain of that partial Seifert circle.}
\end{definition}

The special cord diagram as shown in Figure \ref{cord_diagram} is not coherent: no matter how we choose the orientation of $C$, there is always a partial Seifert circle whose domain contains some Seifert circles. Figure \ref{coherent} shows a coherent special cord diagram that is equivalent to it. The following lemma assures us that this is always possible.

\begin{figure}[htb!]
\includegraphics[scale=1.0]{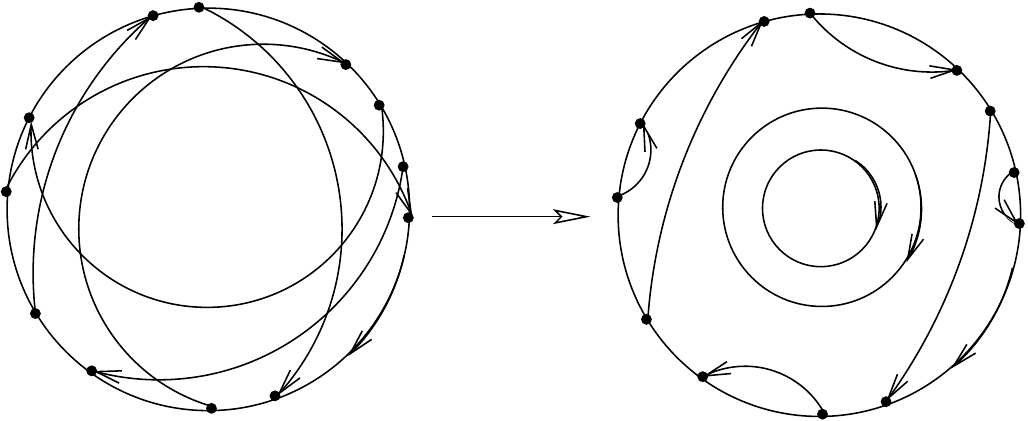}
\caption{Left: A special cord diagram equivalent to the special cord diagram shown in the left of Figure \ref{cord_diagram}; Right: The corresponding Seifert diagram is coherent with the orientation $C$ as shown in the figure.
\label{coherent}}
\end{figure}

\medskip
\begin{lemma}\label{Lemma1}
Let $R$ be a special cord diagram with $n$ cords, then there exists a coherent special cord diagram $R^\p$ that is equivalent to $R$. Furthermore, the Seifert diagram of $R^\p$ contains exactly $n$ partial Seifert circles and at most $n-1$ Seifert circles.
\end{lemma}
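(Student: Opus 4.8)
The plan is to prove the two assertions by different means: the count of partial Seifert circles is \emph{forced} and needs no rerouting, while coherence and the bound on the number of Seifert circles are obtained by redrawing the cords inside $C$ using the equivalence relation on special cord diagrams.

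First I would settle the partial Seifert circle count, which in fact holds for \emph{any} Seifert diagram of \emph{any} special cord diagram with $n$ cords. Smoothing takes place in the interior of $\overline{C}$, so it neither creates nor destroys points on $C$; the points of the Seifert diagram lying on $C$ are exactly the $2n$ endpoints of $\alpha_1,\dots,\alpha_n$. A Seifert circle is closed and meets $C$ nowhere, whereas a partial Seifert circle is an arc meeting $C$ in exactly two points. Hence the $2n$ endpoints are distributed two per arc among the partial Seifert circles, so there are exactly $n$ of them. Orienting everything by $\K$ refines this: each partial Seifert circle runs from a cord tail to a cord head and so consumes exactly one of the $n$ ``in'' endpoints, a fact I would keep for the counting step.

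Next I would construct the coherent representative $R^\p$. Fixing an orientation of $C$, the equivalence lets me redraw each cord as an arbitrary simple arc with the same endpoints inside $\overline{C}$. The aim is to push all closed smoothed curves into a single nested, concentric family in the centre of $\overline{C}$ while the partial Seifert circles hug $C$; this is a Vogel/Yamada-style normalisation of the Seifert picture. I would route the cords so that, after smoothing, the closed curves are concentric and co-oriented with $C$ (condition (i) of Definition \ref{coherent}), each partial-circle domain is a boundary-parallel region meeting none of the central circles (condition (ii)), and the partial-circle domains are nested (condition (iii)). Checking that one global orientation of $C$ makes all three conditions hold at once is the first technical point.

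Finally I would bound the number $m$ of Seifert circles. I would reduce to a connected cord diagram: the bound is additive over connected components of the redrawn diagram, and since every cord meets $C$, each component carries at least one partial Seifert circle. For a connected component with $k$ cords it then suffices to show it yields at most $k-1$ Seifert circles; summing gives $m\le n-c_0$, where $c_0\ge 1$ is the number of components, hence $m\le n-1$. For the per-component inequality I would redraw with \emph{minimal} crossings, so that two cords cross at most once (exactly once precisely when their endpoints interleave); since cords are simple arcs, this controls the crossing number. In the concentric coherent picture the smoothed curves are layered by nesting depth, each depth carries a single Seifert circle joined to the neighbouring depths only through crossings, and every Seifert circle carries at least two crossings; an Euler-characteristic/spanning-tree count on the Seifert graph (vertices the $k+m_c$ smoothed curves, edges the crossings), together with the fact that all $2k$ cord ends lie on $C$, should deliver $m_c\le k-1$. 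The hard part, and the main obstacle, is precisely the coupling of these last two steps: arranging the redrawing to be simultaneously coherent \emph{and} crossing-minimal, and then extracting the clean inequality from the layered structure. The underlying difficulty is that a single smoothed curve is in general assembled from pieces of several cords, so ``deleting a cord'' does not delete a whole Seifert circle; this is what blocks a naive induction on $n$ and forces the global connectivity argument. As a fallback I would run an induction that peels off a cord which can be made boundary-parallel and reinserts it so as to create at most one new Seifert circle while preserving coherence, arguing separately that such a reducible cord is always available after the concentric normalisation.
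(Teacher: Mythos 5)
Your endpoint count is correct and clean: since smoothing happens in the interior of $\overline{C}$, every partial Seifert circle meets $C$ in exactly two of the $2n$ cord endpoints, so there are exactly $n$ of them in \emph{any} Seifert diagram of a special cord diagram. That part is fine (the paper gets the same count as a byproduct of its induction). The problem is everything after it: both of your routes to coherence and to the bound of $n-1$ Seifert circles stop exactly where the actual work begins, and you say so yourself (``should deliver $m_c\le k-1$'', ``the hard part, and the main obstacle''). Your primary route does not close: for the per-component bound you invoke a Seifert-graph count, but connectivity of that graph only gives a \emph{lower} bound on crossings (edges $\ge$ vertices $-1$), not an upper bound on $m_c$; and incidence counting with ``each closed circle carries at least two (or three) crossings'' against up to $\binom{k}{2}$ crossings in a crossing-minimal drawing yields something like $m_c=O(k^2)$, nowhere near $k-1$. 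Nor is it established that a crossing-minimal drawing can be made coherent, or that the bound is additive over components after redrawing (interleaving endpoint patterns can force crossings between would-be components). So the main route is not a proof, and it is not clear it can be repaired as stated.

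Your fallback is, in outline, the paper's actual argument: induct on the cords, and when a cord is (re)inserted into an already coherent diagram, reroute it so that it contributes one partial Seifert circle and at most one new Seifert circle while preserving coherence. But the entire content of the lemma lives in that insertion step, and you leave it unproved. The paper resolves it by a concrete two-case analysis governed by how the companion arc of the new cord (the arc of $C$ parallel to it) meets the companions of the existing partial Seifert circles: if each such intersection is empty or a single arc of $C$, the new cord can be routed to hug its companion, creating one new partial Seifert circle and \emph{no} new Seifert circle; if some intersection consists of two disjoint arcs (interleaved companions), the routing necessarily closes off exactly one new Seifert circle, concentric and co-oriented with the existing ones. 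Starting from the trivial case $n=1$ (one partial circle, zero circles), this gives exactly $n$ partial Seifert circles and at most $n-1$ Seifert circles. Note also that the paper inserts the cords in an \emph{arbitrary} order, using only the freedom to reroute the newly added cord; your fallback instead posits that a ``boundary-parallel/reducible'' cord is always available after normalisation, which is an additional unproven claim that the paper's argument shows to be unnecessary. To complete your proof you would need to supply precisely this companion-intersection case analysis (or an equivalent), verifying in each case that conditions (i)--(iii) of coherence are preserved.
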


\begin{proof}
Let us assign $C$ the clockwise orientation. We will prove the lemma by induction. The case of $n=1$ is trivial. Assume that the statement of the lemma holds for $n=n_0\ge 1$ and let us consider the case for $n=n_0+1$. Consider first the special cord diagram $R_{n_0}$ containing the first $n_0$ cords. By the induction assumption, there exists a coherent special cord diagram $R^\p_{n_0}$ that is equivalent to $R_{n_0}$ such that its Seifert diagram $S_{n_0}$ contains $n_0$ partial Seifert circles and at most $n_0-1$ (concentric) Seifert circles which all have clockwise orientation. We will construct $R^\p_{n_0+1}$ by choosing an appropriate $\alpha_{n_0+1}^\p$ (namely the last cord appropriately modified) starting from $R^\p_{n_0}$.  

There are two cases to consider. In the first case, the intersection of the companion of $\alpha_{n_0+1}$ (the last cord of $R_{n_0+1}$) with the companion of any other partial Seifert circle is either empty or a simply connected arc on $C$.  Figure \ref{intersect2} illustrates how $\alpha_{n_0+1}^\p$ may be chosen and the resulting Seifert diagram after all crossings have been smoothed. Notice that in the illustration we only showed Seifert circles and partial Seifert circles of $R^\p_{n_0+1}$. Although $\alpha_{n_0+1}^\p$ may have additional crossings with cords in the original diagram $R^\p_{n_0}$, once these crossings are smoothed, due to the orientation of the curves involved, it is easy to verify that the resulting Seifert circles and partial Seifert circles are as illustrated in Figure \ref{intersect2}. It is clear that in this case we obtain a new coherent Seifert diagram with one additional partial Seifert circle and no additional Seifert circles. Thus the statement of the lemma holds for this case. In the second case, the intersection of the companion of $\alpha_{n_0+1}$ with the companion of at least one other partial Seifert circle consists of two disconnected simple arcs on $C$ as shown in the left of Figure \ref{new_Seifert}. The middle of Figure \ref{new_Seifert} shows how 
$\alpha_{n_0+1}^\p$ is constructed and the right side shows the resulting Seifert diagram: it has one additional partial Seifert circle and one additional Seifert circle. Again the statement of the lemma holds and this proves the lemma.
\end{proof}

\begin{figure}[htb!]
\includegraphics[scale=1.0]{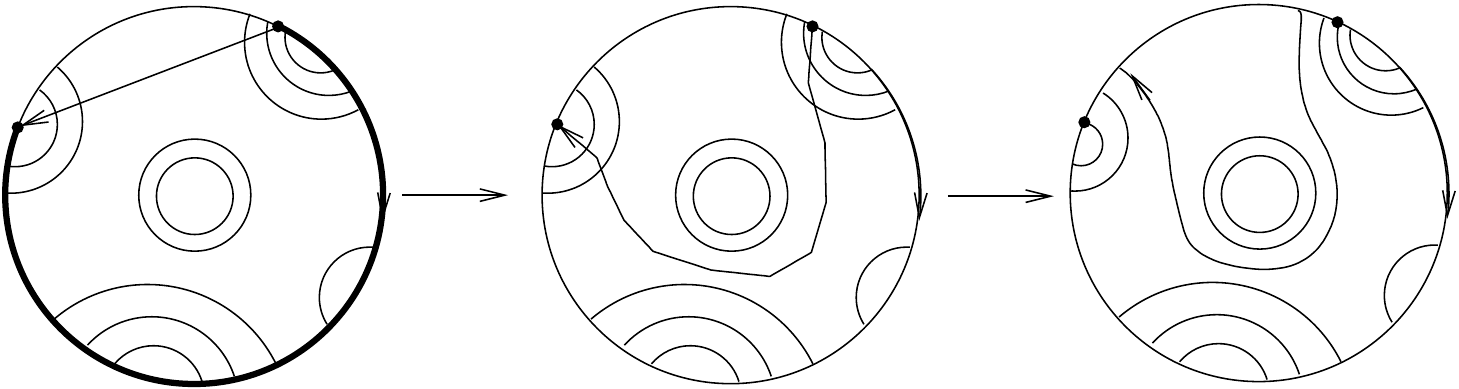}
\caption{Left: The companion of $\alpha_{n_0+1}$ is shown in thick line and its intersection with the companions of other partial Seifert circles are either empty or a simply connected arcs on $C$. The orientations of the Seifert circles and partial Seifert circles are parallel to that of $C$ (not shown in the figure); Middle: The choice of $\alpha^\p_{n_0+1}$; Right: The resulting Seifert diagram.
\label{intersect2}}
\end{figure}

\begin{figure}[htb!]
\includegraphics[scale=1.0]{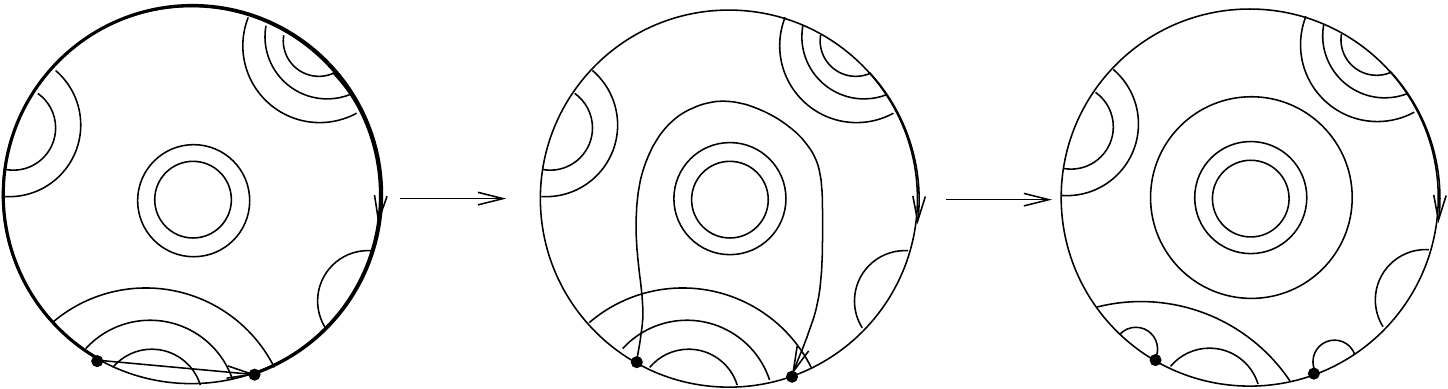}
\caption{Left: The companion of $\alpha_{n_0+1}$ is shown in thick line. Notice that its intersection with the companion of one partial Seifert circles consists of two disjoint arcs on $C$. The orientations of the Seifert circles and partial Seifert circles are parallel to that of $C$ (not shown in the figure); Middle: The choice of $\alpha^\p_{n_0+1}$; Right: The resulting Seifert diagram.
\label{new_Seifert}}
\end{figure}

\section{Absolute braid index bounds the ropelength from below}\label{s3}

Let us first consider links realized on the cubic lattice. Let $\K$ be an un-oriented link and $K_c$ a realization of $\K$ on the cubic lattice. The length of $K_c$ is denoted by $L(K_c)$ and the minimum of $L(K_c)$ over all lattice realization $K_c$ of $\K$ is called the {\em minimum step number} of $\K$ and is denoted by $L_c(\K)$. One nice property of $L_c(\K)$ is that in theory it can be determined through exhaustive search. For example, it has been shown that $L_c(\K)=24$, $30$ and $34$ for the trefoil \cite{Diao1993}, the figure 8 knot and $34$ for the $5_1$ knot \cite{Scharein2009}. However in reality the precise value of $L_c(\K)$ is also very difficult to determine and the above three examples are the only known results for nontrivial knots in fact. A line segment on $K_c$ between two neighboring lattice points is called a {\em step}. A step that is parallel to the $x$-axis is called an $x$-step. $y$-steps and $z$-steps are similarly defined. Let $x(K_c)$,  $y(K_c)$ and $z(K_c)$ be the total number of $x$-steps, $y$-steps and $z$-steps respectively, then $x(K_c)+y(K_c)+z(K_c)=L(K_c)$. Without loss of generality, let us assume that $z(K_c)\ge \max\{x(K_c),y(K_c)\}$ hence $z(K_c)\ge (1/3)L(K_c)$ and $x(K_c)+y(K_c)=L(K_c)-z(K_c)\le (2/3)L(K_c)$. We now consider the projection of $K_c$ to the $xy$-plane. The resulting diagram is not a regular one. However if we tilt $K_c$ slightly, then we will obtain a regular projection of $K_c$ and all crossings will occur near a lattice point on the $xy$-plane. At a lattice point where we see crossings of the projection, consider the arcs of the projection bounded by a unit square centered at the lattice point as shown in Figure \ref{square}. It is rather obvious that these arcs define a special cord diagram with each arc resides in a slab that is disjoint from other slabs that contain the other arcs, since each cord consists of two half  steps that are parts of some $x$ and/or $y$ steps, and possibly some consecutive $z$ steps, hence two different cords is separated by a slab of thickness near one (without the tilt it would be precisely one).

\begin{figure}[htb!]
\includegraphics[scale=.8]{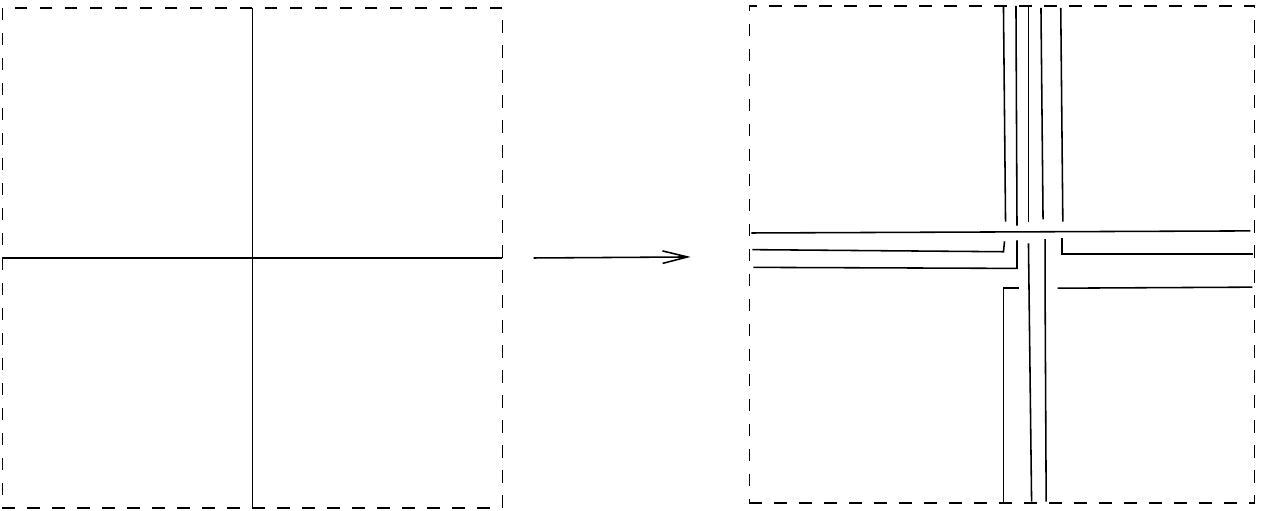}
\caption{Left: the top view of a unit length square centered at a lattice point where the projected strands of $K_c$ intersect; Right: A slightly tilted projection leads to a special cord diagram.
\label{square}}
\end{figure}

Let $m$ be the number of  lattice points where the projection of $K_c$ has intersections, and let $n_j$ be the number of arcs involved at the $j$-th such lattice point. Now assign $K_c$ an orientation so that it yields $\b(\K)$. By Lemma \ref{Lemma1}, we can modify the special cord diagrams to make them coherent. The result is a regular projection $K^\p$ which is an ambient isotopy of $K_c$. After we smooth all crossings in $K^\p$, at the $j$-th cord diagram, we obtain $n_j$ partial Seifert circles and at most $n_j-1$ Seifert circles. Each partial Seifert circles and each arc of $K^\p$ that is not contained in these special cord diagrams must be connected to at least one other partial Seifert circle in order to form a complete Seifert circle, thus the total number of Seifert circles in $K^\p$ formed by the partial Seifert circles and the arcs not in the cord diagrams is at most $\frac{1}{2}\sum_{1\le j\le m}n_j$. It follows that the total number of Seifert circles in $K^\p$ (denoted by $s(K^\p)$) is bounded above by
$\frac{1}{2}\sum_{1\le j\le m}n_j+\sum_{1\le j\le m}(n_j-1)<\frac{3}{2}\sum_{1\le j\le m}n_j$. On the other hand, each cord in the special diagram has total length one in its $x$ and $y$-step portion, hence the total length of the $x$ and $y$-steps in the projection of $K_c$ is at least $\sum_{1\le j\le m}n_j$. Thus we have $\sum_{1\le j\le m}n_j\le x(K_c)+y(K_c)\le (2/3)L(K_c)$ and it follows that 
$$
s(K^\p)< \frac{3}{2}\sum_{1\le j\le m}n_j\le  \frac{3}{2} \frac{2}{3}L(K_c)=L(K_c).
$$
It is well known that for any oriented link diagram $\D$, we have $\textbf{b}(\D)\le s(\D)$ where $s(\D)$ is the number of Seifert circles in $\D$ \cite{Ya}. Since $K_c$ has the orientation that yields $\textbf{b}(K_c)=\b(\K)$, we have $\b(\K)=\textbf{b}(K_c)=\textbf{b}(K^\p)\le s(K^\p)<L(K_c)$. Since $K_c$ is arbitrary, replacing it by a step length minimizer of $\K$ yields $\b(\K)<L_c(\K)$. Finally, it has been shown that $L_c(\K)<14L(\K)$ \cite{Diao2002}, thus we have proven the following theorem:

\begin{theorem}\label{T1}
Let $\K$ be an un-oriented link, then $\b(\K)<L_c(\K)<14L(\K)$, that is, $L(\K)>(1/14)\b(\K)$.
\end{theorem}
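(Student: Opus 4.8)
The plan is to route everything through the cubic-lattice model of $\K$ and to bound the absolute braid index by the number of Seifert circles of a carefully chosen projection, then to transfer the lattice estimate back to ropelength using a known comparison. First I would fix an arbitrary lattice realization $K_c$ of $\K$ and split its length as $L(K_c)=x(K_c)+y(K_c)+z(K_c)$; after relabelling the coordinate axes so that the $z$-direction carries at least a third of the steps, this gives $x(K_c)+y(K_c)\le (2/3)L(K_c)$. Projecting $K_c$ to the $xy$-plane after a generic infinitesimal tilt makes the projection regular and concentrates all crossings near lattice points, and at each such point the arcs inside a unit square form a special cord diagram. Conditions (iv) and (v) of Definition \ref{cord} hold here because each cord is built from two half $x/y$-steps together with a run of $z$-steps, so distinct cords sit in disjoint slabs.

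Next I would orient $K_c$ so as to realize the absolute braid index, making $\textbf{b}(K_c)=\b(\K)$. Applying Lemma \ref{Lemma1} to each of the $m$ cord diagrams (the $j$-th having $n_j$ cords) replaces them by equivalent coherent ones, producing a regular projection $K^\p$ ambient isotopic to $K_c$. Smoothing all crossings, the $j$-th coherent diagram contributes exactly $n_j$ partial Seifert circles and at most $n_j-1$ genuine Seifert circles. The crux of the count is that every partial Seifert circle, together with the arcs lying outside the cord diagrams, must pair with at least one other partial Seifert circle to close up into a full Seifert circle; hence these produce at most $\tfrac12\sum_j n_j$ full circles, and adding the $\sum_j(n_j-1)$ circles internal to the coherent diagrams gives $s(K^\p)<\tfrac32\sum_j n_j$.

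Finally I would convert the step count into a length bound: each cord occupies total $x/y$-step length exactly one, so $\sum_j n_j\le x(K_c)+y(K_c)\le (2/3)L(K_c)$, whence $s(K^\p)<L(K_c)$. The classical bound $\textbf{b}(\D)\le s(\D)$ of Yamada \cite{Ya} then yields $\b(\K)=\textbf{b}(K_c)=\textbf{b}(K^\p)\le s(K^\p)<L(K_c)$; since $K_c$ was arbitrary, minimizing over lattice realizations gives $\b(\K)<L_c(\K)$, and composing with the known comparison $L_c(\K)<14L(\K)$ \cite{Diao2002} completes the proof.

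The hard part is the Seifert-circle count rather than any of the length bookkeeping. Without the coherence supplied by Lemma \ref{Lemma1}, smoothing the crossings at the lattice points could create uncontrollably many Seifert circles and the estimate would collapse; the coherence caps the circles internal to each cord diagram at $n_j-1$, and the factor $\tfrac12$ coming from the requirement that each partial Seifert circle find a partner to close is exactly what forces $s(K^\p)$ below $L(K_c)$. I would therefore spend most of the care on verifying that the coherent smoothing really yields the claimed counts and that arcs outside the cord diagrams cannot inflate the number of closed circles.
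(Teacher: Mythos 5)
Your proposal is correct and follows essentially the same route as the paper's own proof: the same lattice decomposition with $x(K_c)+y(K_c)\le(2/3)L(K_c)$, the same tilted projection producing special cord diagrams, the same application of Lemma \ref{Lemma1} and the identical Seifert-circle count $s(K^\p)<\frac{3}{2}\sum_j n_j\le L(K_c)$, finished with Yamada's bound and the comparison $L_c(\K)<14L(\K)$ from \cite{Diao2002}. You have also correctly identified the coherence lemma and the pairing count of partial Seifert circles as the crux, which is exactly where the paper invests its effort.
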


In a recent paper, the author and his colleagues derived explicit formulas for braid indices of many alternating links including all  alternating Montesinos links \cite{Diao2019}. Using these formulas one can easily identify many families of alternating links with small bridge numbers but with braid indices proportional to their crossings numbers, these provide us new examples of link families whose ropelengths grow at least linearly as their crossing numbers (since the previously known method based on the bridge numbers would not get us these results). The following are just a few such examples.

\begin{example}\label{E1}{\em 
Let $\K$ be the $(2,2n)$ torus link, a two component link with $2n$ crossings. There are two different choices for the orientations of the two components. One of them yields a braid index of $2$ while the other yields a braid index of $n+1$. Thus we have $\b(\K)=n+1=Cr(\K)/2+1$, hence $L_c(\K)>n+1$ and $L(\K)>(n+1)/14>Cr(\K)/28$.}
\end{example}

\begin{example}\label{E1}{\em 
Let $\K$ be a twist knot with $n\ge 4$ crossings. We have $\b(\K)=\textbf{b}(\K)=k+1=(Cr(\K)+1)/2$ if $n=2k+1$ is odd, and $\b(\K)=\textbf{b}(\K)=k+2=Cr(\K)/2+1$ if $n=2k+2$ is even. It follows that $L(\K)>(Cr(\K)+1)/28$ for any twist knot $\K$.}
\end{example}

\begin{figure}[htb!]
\includegraphics[scale=.8]{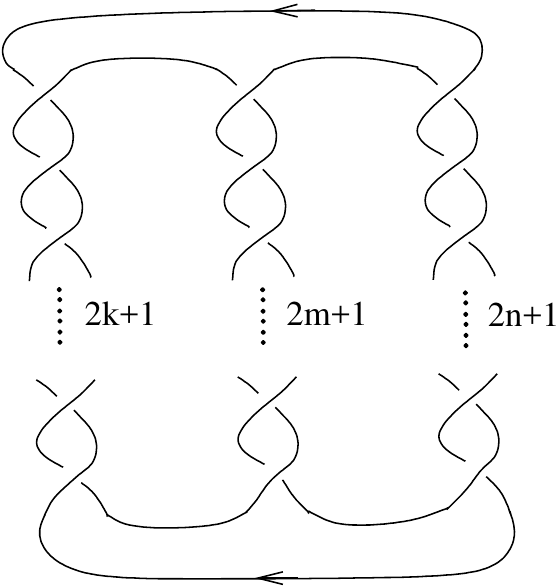}
\caption{An alternating pretzel knot with three columns containing $2k+1$, $2m+1$ and $2n+1$ crossings respectively ($k$, $m$ and $n$ are non-negative integers and the case of $k=m=n=0$ gives the trefoil knot). 
\label{Pretzel}}
\end{figure}

\begin{example}\label{E2}{\em 
Consider the pretzel knot $\K$ a projection of which is given in Figure \ref{Pretzel}. $Cr(\K)=2(k+m+n)+3$ since it is alternating. It can be calculated from the formulas given in \cite{Diao2019} that $\b(\K)=\textbf{b}(\K)=2+k+m+n>(1/2)Cr(\K)$. It follows that $L(\K)>Cr(\K)/28$ as well.}
\end{example}

Notice that in the above examples, the bridge numbers are either 2 or 3. Furthermore, since the link diagrams given in the above examples are all algebraic link diagrams, it is known that the ropelengths of these links grow at most linearly as their crossings numbers \cite{Diao2006}. Thus the ropelengths of these links in fact grow linearly as their crossing numbers.

\section{Further discussions}\label{s4}

For an oriented link $\K$ with a projection diagram $\D$, consider the HOMFLY-PT polynomial $H(\D,z,a)$ defined using the skein relation $aH(\D_+,z,a)-a^{-1}H(\D_-,z,a)=zH(\D_0,z,a)$ (and the initial condition $H(\D,z,a)=1$ if $\D$ is the trivial knot). Let $E(\D)$ and $e(\D)$ be the highest and lowest powers of $a$ in $H(\D,z,a)$ and define $\textbf{b}_0(\K)=(E(\D)-e(\D))/2+1$. It is a well known result that $\textbf{b}_0(\K)\le \textbf{b}(\K)$ where $\textbf{b}(\K)$ is the braid index of $\K$ \cite{Morton1986}. In the case that $\K$ is un-oriented, similarly to the definition of $\b(\K)$, we define 
$
\b_0(\K)=\max\{\textbf{b}_0(\K^\p): \ \K^\p\in O(\K)\}$ where $O(\K)$ is the set of oriented links obtained by assigning all possible orientations to the components of $\K$. Apparently we have $\b_0(\K)\le \b(\K)$ hence we have the following theorem, which is handy when we do not have a precise formula for the braid index of the link.

\begin{theorem}\label{T2}
Let $\K$ be an un-oriented link, then $\b_0(\K)<L_c(\K)<14L(\K)$ and $L(\K)>(1/14)\b_0(\K)$.
\end{theorem}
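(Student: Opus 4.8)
The plan is to obtain this theorem as an immediate corollary of Theorem \ref{T1}, using only the Morton-Franks-Williams inequality recalled just above and the fact that the two absolute invariants are defined as maxima over the same index set. No new geometric or topological input beyond Theorem \ref{T1} should be required.

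First I would fix an un-oriented link $\K$ and recall that both $\b_0(\K)$ and $\b(\K)$ are defined as maxima over $O(\K)$, the finite set of oriented links obtained by assigning all possible orientations to the components of $\K$. For each individual oriented link $\K^\p \in O(\K)$, the Morton-Franks-Williams bound \cite{Morton1986} gives the term-by-term inequality $\textbf{b}_0(\K^\p) \le \textbf{b}(\K^\p)$, where the left-hand side is read off from the spread of the $a$-variable in the HOMFLY-PT polynomial and the right-hand side is the braid index of the oriented link $\K^\p$.

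Next I would pass to the un-oriented setting by taking the maximum of both sides over $O(\K)$. Since a pointwise inequality between two functions on a common domain is preserved under taking maxima, this yields
$$\b_0(\K) = \max_{\K^\p \in O(\K)} \textbf{b}_0(\K^\p) \le \max_{\K^\p \in O(\K)} \textbf{b}(\K^\p) = \b(\K).$$
Finally I would concatenate this with the chain already established in Theorem \ref{T1}, namely $\b(\K) < L_c(\K) < 14 L(\K)$, to conclude
$$\b_0(\K) \le \b(\K) < L_c(\K) < 14 L(\K),$$
from which $L(\K) > (1/14)\b_0(\K)$ follows at once.

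I expect no genuine obstacle here: the entire argument reduces to the elementary fact that maxima preserve pointwise inequalities, so all of the substantive work has already been carried out in the proof of Theorem \ref{T1}. The single point worth stating with care is that $\b_0$ and $\b$ are maximized over the \emph{same} set $O(\K)$; it is exactly this shared index set that lets the oriented Morton-Franks-Williams bound be promoted to a bound between the two absolute invariants without any loss.
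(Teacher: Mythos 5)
Your proposal is correct and coincides with the paper's own argument: the paper likewise observes that the pointwise Morton--Franks--Williams bound $\textbf{b}_0(\K^\p)\le \textbf{b}(\K^\p)$ for each $\K^\p\in O(\K)$ yields $\b_0(\K)\le \b(\K)$ upon taking maxima over the common index set $O(\K)$, and then concatenates this with the chain $\b(\K)<L_c(\K)<14L(\K)$ from Theorem \ref{T1}. No further review is needed.
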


It has been conjectured that the ropelength of an alternating link $\K$ is bounded below by a constant multiple of its crossing number. Our result shows that this conjecture holds for many alternating links. A remaining challenge is about the alternating links whose absolute braid index is small, for example the $(2, 2n+1)$ torus knot whose braid index is 2. While its minimum projection looks so much like the minimum projection of the $(2, 2n)$ torus link and it is quite plausible that its ropelength should behave linearly as its crossing number, we do not have a way to prove it! We end this paper with this problem as a challenge to our reader.

\end{document}